\newtheorem{lemma}{Lemma}
\newtheorem{corollary}{Corollary}
\newtheorem{theorem}{Theorem}
\title{A Note on the Maximum Number of Minimal Connected Dominating Sets in a Graph}
\author{Faisal N. Abu-Khzam}
\affil[]{Department of Computer Science and Mathematics\\ Lebanese American University\\ Beirut, Lebanon}
\date{}
\begin{document}

\maketitle

\thispagestyle{empty}

\begin{abstract}

We prove constructively that the maximum possible number of minimal connected dominating sets in a connected undirected graph of order $n$ is in $\Omega(1.489^n)$. This improves the previously known lower bound of $\Omega(1.4422^n)$ and reduces the gap between lower and upper bounds for input-sensitive enumeration of minimal connected dominating sets in general graphs as well as some special graph classes.
\end{abstract}

\section{Introduction}

A connected dominating set in a graph $G=(V,E)$ is a set of vertices whose closed neighborhood is $V$ that induces a connected subgraph. A connected dominating set is inclusion minimal if it does not contain another connected dominating set as a proper subset. 

Enumerating all minimal connected dominating sets in a given graph can be trivially performed in $O(2^n)$. Whether a better enumeration algorithm exists was one of the most important open problems posed in the first workshop on enumeration (Lorentz Center, Netherlands, 2015) \cite{lorentz}.
The problem has been subsequently addressed in \cite{belowAllsubsets} where an algorithm that runs in $O((2-10^{-50})^n)$ was presented.
This slightly improves the upper bound on the number of minimal connected dominating sets in a (general) graph. 

On the other hand, the maximum number of minimal connected dominating sets in a graph was shown to be in $\Omega(3^{\frac{n}{3}})$ \cite{GHK16}. This lower bound is obviously very low compared to the upper bound and to the running time of the current asymptotically-fastest exact algorithm, which is in $O(1.862^n)$ \cite{abu2011}. 
The gap between upper and lower bounds is narrower when it comes to special graph classes. On chordal graphs, for example, the upper bound has been recently improved to $O(1.4736^n)$ \cite{Petr2020}. Other improved lower/upper bounds have been obtained for AT-free, strongly chordal, distance-hereditary graphs, and cographs in \cite{GHK16}. Further improved bounds for split graphs and cobipartite graphs have been obtained in \cite{skjorten2017faster}.

In this note we report an improved lower bound on the maximum number of minimal connected dominating sets in a graph. This is related to the enumeration of all the minimal connected dominating sets since it also gives a lower bound on the asymptotic performance of any input-sensitive enumeration algorithm.

\section{Graphs with Large Minimal Connected Dominating Sets}

Given arbitrary positive integers $k,t$, we construct a graph $G_t^k$ of order $n = k(2t+1) + 1$ as follows. 

The main building blocks of $G_t^k$ consist of $k$ copies of a base-graph $G_{t}$, 
of order $2t-1$. The vertex set of $G_{t}$ consists of three layers. The first layer is a set  $X=\{x_{1}\ldots x_{t}\}$ that induces a clique. The second is an independent set $Y=\{y_{1},\ldots y_{t}\}$, while the third layer consists of a singleton $\{z\}$.
Each vertex $x_{i}\in X$ has exactly $t-1$ neighbors in $Y$: $N(x_{i}) = \{y_{j}\in Y: i\neq j\}$. In other words, the base-graph $G_{t}$ has a maximum anti-matching\footnote{An anti-matching in $G$ is a collection of disjoint non-adjacent pairs of its vertices. } $\{\{x_{j},y_{j}\}: 1\leq j\leq t\}$. 
In fact, $X\cup Y$ induces a copy of $K_{t,t}$ minus a perfect matching. Finally the vertex $z$ is adjacent to all the $t$ vertices in $Y$. 
Figure \ref{basic-block} below shows the graph $G_t$ for $t=4$.

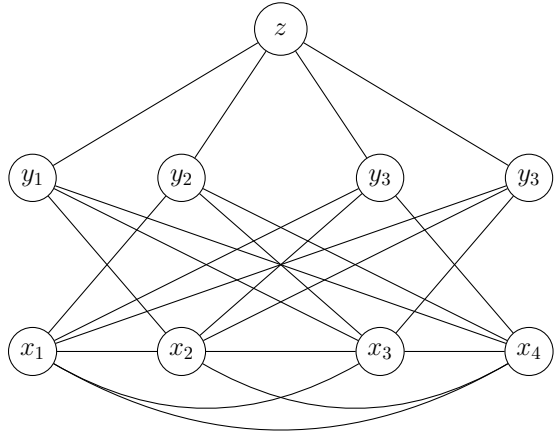
\begin{figure}[htb!]

\centering
\begin{tikzpicture}[->,
scale=0.33, every node/.style={anchor=center, scale=0.7},node distance=1cm,main node/.style={circle,fill=white!20,draw,font=\sffamily\Large\bfseries}]

\node [main node] (1) at (-2,2) {\(~z~\)};
\node[main node] (2) at (-12,-4) {\(y_1\)};
\node [main node] (3) at (-6,-4) {\(y_2\)};
\node [main node] (4) at (2,-4) {\(y_3\)};
\node [main node] (5) at (8,-4) {\(y_3\)};
\node[main node] (6) at (-12,-11) {\(x_1\)};
\node [main node] (7) at (-6,-11) {\(x_2\)};
\node [main node] (8) at (2,-11) {\(x_3\)};
\node [main node] (9) at (8,-11) {\(x_4\)};

\begin{scope}[-]

\draw [thin] (1) -- (2);
\draw [thin] (1) -- (3);
\draw [thin] (1) -- (4);
\draw [thin] (1) -- (5);
\draw [thin] (2) -- (7);
\draw [thin] (2) -- (8);
\draw [thin] (2) -- (9);
\draw [thin] (3) -- (6);
\draw [thin] (3) -- (8);
\draw [thin] (3) -- (9);
\draw [thin] (4) -- (6);
\draw [thin] (4) -- (7);
\draw [thin] (4) -- (9);
\draw [thin] (5) -- (6);
\draw [thin] (5) -- (7);
\draw [thin] (5) -- (8);
\draw [thin] (6) -- (7);
\draw [thin] (6) to[bend right] (8);
\draw [thin] (6) to[bend right] (9);
\draw [thin] (7) -- (8);
\draw [thin] (8) -- (9);
\draw [thin] (7) to[bend right] (9);

\end{scope}

\end{tikzpicture}

\caption{The graph $G_4$ }
\label{basic-block}
\end{figure}

\begin{lemma}
\label{Gi}
For each $t>0$, the graph $G_t$ has exactly $\frac{t^3+t^2}{2}-t$ minimal connected dominating sets that have non-empty intersection with the set $X$.
\end{lemma}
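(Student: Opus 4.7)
The plan is to classify the minimal connected dominating sets $D$ with $D \cap X \neq \emptyset$ according to $m := |D \cap X|$, and show the count decomposes as $t(t-1)$ sets with $m=1$ and $\binom{t}{2}\cdot t = \frac{t^2(t-1)}{2}$ sets with $m=2$, summing to $\frac{t(t-1)(t+2)}{2} = \frac{t^3+t^2}{2}-t$. The analysis rests on three structural facts forced by the construction of $G_t$: (i) $z$ has neighborhood exactly $Y$, so $z$ is dominated iff $z\in D$ or $D\cap Y\neq\emptyset$; (ii) $y_i$ has neighborhood exactly $(X\setminus\{x_i\})\cup\{z\}$; and (iii) $X$ is a clique while each $y_j$ is adjacent to every $x_i$ with $i\neq j$, but $x_j\not\sim y_j$.

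First I would prove that $m\geq 3$ is impossible for minimal $D$. Given such $D$ and any $x_i\in D$, removing $x_i$ still leaves $D\cap X$ as a clique of size $\geq 2$; by (iii) every $y_k$ still has an $x$-neighbor in $D\cap X\setminus\{x_i\}$, hence $X\cup Y$ remains dominated, $z$-domination is untouched, and every vertex of $D\cap (Y\cup\{z\})$ keeps a neighbor in the surviving $X$-clique or in $D\cap Y$, so connectivity is preserved.

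For $m=1$ with $D\cap X=\{x_i\}$, fact (ii) forces $z\in D$ since otherwise $y_i$ has no neighbor in $D$; then connectivity between $x_i$ and $z$ requires at least one $y_k$ with $k\neq i$, while minimality forbids any second $y_k$ (because $z$ already dominates $Y$) and forbids $y_i$ (because $z$ already dominates it). This yields exactly the sets $\{x_i, y_k, z\}$ with $k\neq i$, i.e.\ $t(t-1)$ sets in total. For $m=2$ with $D\cap X=\{x_i, x_j\}$, fact (iii) guarantees $X\cup Y$ is already dominated, so only $z$ remains open. If $z\in D$ I would show that whatever $y_k\in D$ is used for the $x$-to-$z$ connection, at least one of $x_i,x_j$ can be safely removed, so $D$ is not minimal; if $z\notin D$ then some $y_k\in D$ is required to dominate $z$, and the triple $\{x_i,x_j,y_k\}$ is already a connected dominating set for every $k\in\{1,\dots,t\}$, while any additional $y$ can be removed because $z$ is still dominated through $y_k$. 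This contributes $\binom{t}{2}\cdot t=\frac{t^2(t-1)}{2}$ sets.

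The main obstacle I anticipate is the minimality verification in case $m=2$, particularly the subcases where $k\in\{i,j\}$. For instance, the candidate $\{x_i,x_j,y_j\}$ admits the removal of $x_j$ leaving the connected pair $\{x_i,y_j\}$, so one must argue via (ii) that $y_i$ then loses all its dominators and the result fails to be a CDS. Dually, when $z\in D$ one must check for every choice of $k$, including $k\in\{i,j\}$, that some $x$-vertex is removable, using the combination of $z$-domination of $Y$ with the edges $y_l\sim z$ and $y_l\sim x_m$ for $m\neq l$ together with the non-adjacency $x_l\not\sim y_l$. Adding the two contributing cases gives the claimed total $\frac{t(t-1)(t+2)}{2}=\frac{t^3+t^2}{2}-t$.
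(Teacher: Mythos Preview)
Your proposal is correct and follows essentially the same approach as the paper: split the minimal connected dominating sets by $|D\cap X|\in\{1,2\}$, obtain $t(t-1)$ sets of the form $\{x_i,y_k,z\}$ in the first case and $\binom{t}{2}\cdot t$ sets of the form $\{x_i,x_j,y_k\}$ in the second, and sum. You are more explicit than the paper in ruling out $m\ge 3$, in excluding $z$ from the $m=2$ sets, and in checking minimality for the borderline choices $k\in\{i,j\}$, but the decomposition and the counts are identical.
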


\begin{proof}
The set $X$ cannot have more than two vertices in common with any minimal connected dominating set, since any two elements of $X$ dominate $X\cup Y$. Any minimal connected dominating set that contains exactly one vertex $x_i$ of $X$ must contain the vertex $z$, to dominate $y_i$, and one of the $t-1$ neighbors of $x_i$ (to be connected). There are $t(t-1)$ sets of this type.
Moreover, each pair of elements of $X$ dominates $Y$. So a minimal connected dominating set can be formed by (any) two elements of $X$ and any of the elements of $Y$ (to dominate $z$). There are $t\frac{t(t-1)}{2}$ such sets. 
\end{proof}

The hub-vertex $s$ in $G_t^k$ must be in any connected dominating set, being a cut-vertex. Therefore, there is no need for the set $X$ in $G_t$ to induce a clique (in $G_t^k$), being always dominated by $s$. In other words, the counting used in the above proof still holds if each copy of $G_t$ is replaced by $G_{t}-E(X)$ in $G_t^k$. Here $E(X)$ denotes the set of edges connecting pairs of vertices in $X$. The below figure shows $G_3^3$ without the edges between pairs of element of $X$ in each copy of $G_3$. 

\vspace{5pt}

\begin{figure}[htb!]

\centering
\begin{tikzpicture}[->,
scale=0.29, every node/.style={anchor=center, scale=0.6},node distance=1cm,main node/.style={circle,fill=white!20,draw,font=\sffamily\Large\bfseries}]

\node [main node] (1) at (-17,0) {\(~z_1~\)};
\node[main node] (2) at (-22,-4) {\(y_{11}\)};
\node [main node] (3) at (-17,-4) {\(y_{12}\)};
\node [main node] (4) at (-12,-4) {\(y_{13}\)};
\node[main node] (5) at (-22,-8) {\(x_{11}\)};
\node [main node] (6) at (-17,-8) {\(x_{12}\)};
\node [main node] (7) at (-12,-8) {\(x_{13}\)};

\node [main node] (8) at (-2,0) {\(~z_2~\)};
\node[main node] (9) at (-7,-4) {\(y_{21}\)};
\node [main node] (10) at (-2,-4) {\(y_{22}\)};
\node [main node] (11) at (3,-4) {\(y_{23}\)};
\node[main node] (12) at (-7,-8) {\(x_{21}\)};
\node [main node] (13) at (-2,-8) {\(x_{22}\)};
\node [main node] (14) at (3,-8) {\(x_{23}\)};

\node [main node] (15) at (13,0) {\(~z_3~\)};
\node[main node] (16) at (8,-4) {\(y_{31}\)};
\node [main node] (17) at (13,-4) {\(y_{32}\)};
\node [main node] (18) at (18,-4) {\(y_{33}\)};
\node[main node] (19) at (8,-8) {\(x_{31}\)};
\node [main node] (20) at (13,-8) {\(x_{32}\)};
\node [main node] (21) at (18,-8) {\(x_{33}\)};

\node [main node] (22) at (-2,-18) {\(~s~\)};


\begin{scope}[-]

\draw [thin] (8) -- (9);
\draw [thin] (8) -- (10);
\draw [thin] (8) -- (11);
\draw [thin] (9) -- (13);
\draw [thin] (9) -- (14);
\draw [thin] (10) -- (12);
\draw [thin] (10) -- (14);
\draw [thin] (11) -- (12);
\draw [thin] (11) -- (13);

\draw [thin] (22) -- (5);
\draw [thin] (22) -- (6);
\draw [thin] (22) -- (7);
\draw [thin] (22) -- (12);
\draw [thin] (22) -- (13);
\draw [thin] (22) -- (14);
\draw [thin] (22) -- (19);
\draw [thin] (22) -- (20);
\draw [thin] (22) -- (21);

\draw [thin] (1) -- (2);
\draw [thin] (1) -- (3);
\draw [thin] (1) -- (4);
\draw [thin] (2) -- (6);
\draw [thin] (2) -- (7);
\draw [thin] (3) -- (5);
\draw [thin] (3) -- (7);
\draw [thin] (4) -- (5);
\draw [thin] (4) -- (6);

\draw [thin] (15) -- (16);
\draw [thin] (15) -- (17);
\draw [thin] (15) -- (18);
\draw [thin] (16) -- (20);
\draw [thin] (16) -- (21);
\draw [thin] (17) -- (19);
\draw [thin] (17) -- (21);
\draw [thin] (18) -- (19);
\draw [thin] (18) -- (20);

\end{scope}

\end{tikzpicture}

\caption{The graph $G_3^3$ }
\label{G3-3}
\end{figure}
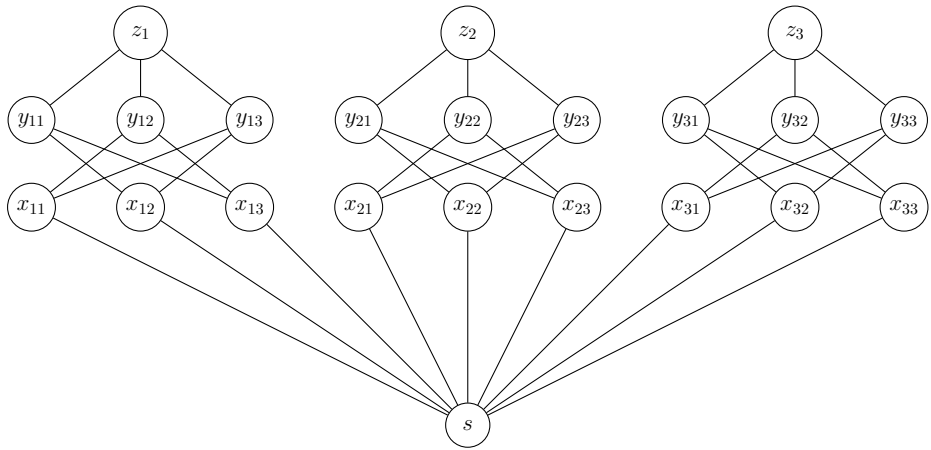

\begin{theorem}
\label{lowerbound}
The maximum number of minimal connected dominating sets in a simple undirected connected graph of order $n$ is in $\Omega(1.489^n)$.
\end{theorem}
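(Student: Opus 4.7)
The plan is to show that the graph $G_t^k$ has at least $\left(\frac{t^3+t^2}{2}-t\right)^k$ minimal connected dominating sets, and then to choose $t$ so that the per-vertex growth rate $\left(\frac{t^3+t^2}{2}-t\right)^{1/(2t+1)}$ is maximized. The very first observation is that the hub-vertex $s$ is a cut vertex of $G_t^k$, so every connected dominating set contains $s$. Moreover, the only edges connecting a copy of the base graph to the rest of $G_t^k$ are those joining $s$ to the corresponding $X_i$, so the restriction of any minimal connected dominating set to copy $i$ must itself contain a vertex of $X_i$.

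My second step is to exploit the independence across copies. Given per-copy sets $S_1,\ldots,S_k$ in which each $S_i \subseteq V(G_t^i)$ is of one of the two types enumerated in the proof of Lemma~\ref{Gi} --- namely Type A: one vertex of $X_i$ together with $z_i$ and an adjacent $y$-vertex, or Type B: two vertices of $X_i$ together with any $y$-vertex --- I would verify that $\{s\} \cup \bigcup_{i=1}^k S_i$ is a connected dominating set of $G_t^k$. Domination of each $X_i$ is free because $s$ is adjacent to all of $X_i$; domination of $Y_i$ and $z_i$ is ensured exactly as in Lemma~\ref{Gi}; and connectivity holds because each $S_i$ is linked to $s$ via its $X_i$-vertices and the chosen $y$-vertex is linked through an $X_i$-vertex already inside $S_i$.

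The third step is to verify inclusion-minimality. For each $v \in S_i$, I would argue that removing $v$ either disconnects some vertex of $S_i$ from $s$ in the induced subgraph or leaves some $y_{ik}$ or $z_i$ undominated; this is a short case analysis on Types A and B that mirrors the proof of Lemma~\ref{Gi}. Since the $k$ per-copy choices are mutually independent and distinct combinations yield distinct MCDS of $G_t^k$, this produces $\left(\frac{t(t-1)(t+2)}{2}\right)^k$ minimal connected dominating sets on a graph of order $n = k(2t+1)+1$.

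Finally, a direct optimization of $\left(\tfrac{t(t-1)(t+2)}{2}\right)^{1/(2t+1)}$ over positive integers $t$ shows that the maximum is attained at $t=4$, where the rate exceeds $1.489$, yielding the desired $\Omega(1.489^n)$ bound. I expect the main obstacle to be the minimality check within $G_t^k$ rather than within the isolated base graph $G_t$: the clique edges inside each $X_i$ are absent in $G_t^k$, and $s$ plays a new dual role in both domination (of every $X_i$) and connectivity (between copies), so one must re-examine carefully that each vertex of each $S_i$ is still non-redundant once $\{s\}$ and the other $S_j$'s are already committed to the set.
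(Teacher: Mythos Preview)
Your proposal is correct and follows essentially the same approach as the paper: build $G_t^k$, invoke Lemma~\ref{Gi} to count per-copy choices, combine them multiplicatively through the hub $s$, and optimize at $t=4$ to obtain $36^{(n-1)/9}\in\Omega(1.489^n)$. If anything, you are more explicit than the paper about the minimality verification inside $G_t^k$ (where the $X_i$-clique edges are dropped and $s$ takes over domination of $X_i$), which the paper handles only by remarking that the count of Lemma~\ref{Gi} still holds.
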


\begin{proof}
By Lemma \ref{Gi}, each copy of the graph $G_{t}$ has $\frac{t^3+t^2}{2}-t$ minimal connected dominating sets. There are $k$ such graphs in $G_t^k$, in addition to the vertex $s$ that connects them all. 
Every minimal connected dominating set must contain $s$ and at least one element from $N(s)$ in each $G_{t}$.
Therefore, the total number of minimal connected dominating sets in $G_t^k$ is $(\frac{t^3+t^2}{2}-t)^k = (\frac{t^3+t^2}{2}-t)^{\frac{n-1}{2t+1}}$. The claimed lower bound is achieved when $t=4$, which gives a total of $36^{\frac{n-1}{9}} \in \Omega(1.489^n)$.
\end{proof}


We note that $G_t^k$ is a $t$-degenerate  graph that is also bipartite (since the set $X$ in each copy of $G_{t}$ can be an independent set). Furthermore, we observe that $G_3^k$ is planar. To see this, simply re-order the elements of $Y$ in each copy of $G_3$ as shown in Figure \ref{planar} below. 



\vspace{-20pt}

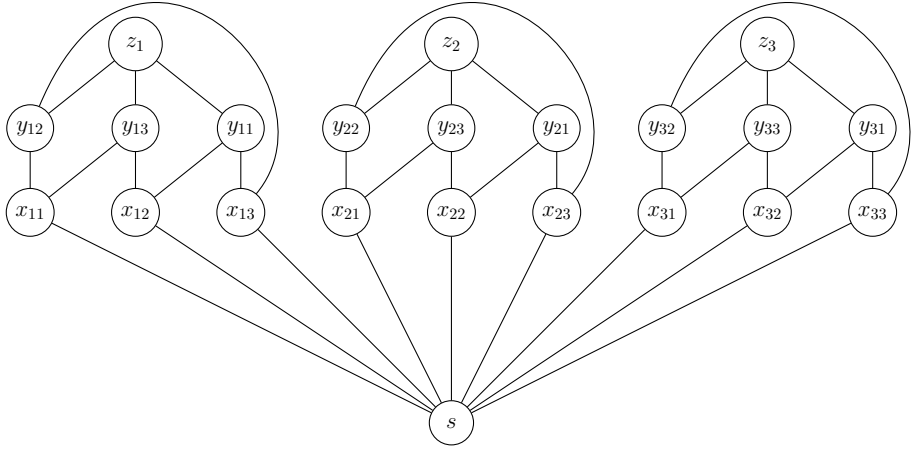
\begin{figure}[htb!]

\centering
\begin{tikzpicture}[->,
scale=0.28, every node/.style={anchor=center, scale=0.6},node distance=1cm,main node/.style={circle,fill=white!20,draw,font=\sffamily\Large\bfseries}]

\node [main node] (1) at (-17,0) {\(~z_1~\)};
\node[main node] (2) at (-22,-4) {\(y_{12}\)};
\node [main node] (3) at (-17,-4) {\(y_{13}\)};
\node [main node] (4) at (-12,-4) {\(y_{11}\)};
\node[main node] (5) at (-22,-8) {\(x_{11}\)};
\node [main node] (6) at (-17,-8) {\(x_{12}\)};
\node [main node] (7) at (-12,-8) {\(x_{13}\)};

\node [main node] (8) at (-2,0) {\(~z_2~\)};
\node[main node] (9) at (-7,-4) {\(y_{22}\)};
\node [main node] (10) at (-2,-4) {\(y_{23}\)};
\node [main node] (11) at (3,-4) {\(y_{21}\)};
\node[main node] (12) at (-7,-8) {\(x_{21}\)};
\node [main node] (13) at (-2,-8) {\(x_{22}\)};
\node [main node] (14) at (3,-8) {\(x_{23}\)};

\node [main node] (15) at (13,0) {\(~z_3~\)};
\node[main node] (16) at (8,-4) {\(y_{32}\)};
\node [main node] (17) at (13,-4) {\(y_{33}\)};
\node [main node] (18) at (18,-4) {\(y_{31}\)};
\node[main node] (19) at (8,-8) {\(x_{31}\)};
\node [main node] (20) at (13,-8) {\(x_{32}\)};
\node [main node] (21) at (18,-8) {\(x_{33}\)};

\node [main node] (22) at (-2,-18) {\(~s~\)};

\node[] (50) at (-17,2) {};


\begin{scope}[-]

\draw [thin] (8) -- (9);
\draw [thin] (8) -- (10);
\draw [thin] (8) -- (11);
\draw [thin] (12) -- (9);
\draw [thin] (12) -- (10);
\draw [thin] (13) -- (10);
\draw [thin] (13) -- (11);
\draw [thin] (14) -- (11);



\draw [thin] (22) -- (5);
\draw [thin] (22) -- (6);
\draw [thin] (22) -- (7);
\draw [thin] (22) -- (12);
\draw [thin] (22) -- (13);
\draw [thin] (22) -- (14);
\draw [thin] (22) -- (19);
\draw [thin] (22) -- (20);
\draw [thin] (22) -- (21);

\draw [thin] (1) -- (2);
\draw [thin] (1) -- (3);
\draw [thin] (1) -- (4);
\draw [thin] (5) -- (2);
\draw [thin] (5) -- (3);
\draw [thin] (6) -- (3);
\draw [thin] (6) -- (4);
\draw [thin] (7) -- (4);
\draw (2) .. controls (-17,8) and (-7,-2) .. (7);

\draw (9) .. controls (-2,8) and (8,-2) .. (14);

\draw (16) .. controls (13,8) and (23,-2) .. (21);

\draw [thin] (15) -- (16);
\draw [thin] (15) -- (17);
\draw [thin] (15) -- (18);
\draw [thin] (19) -- (16);
\draw [thin] (19) -- (17);
\draw [thin] (20) -- (17);
\draw [thin] (20) -- (18);
\draw [thin] (21) -- (18);


\end{scope}

\end{tikzpicture}

\caption{A plane drawing of $G_3^3$ }
\label{planar}
\end{figure}

Therefore, we can obtain an improved lower bound for 3-degenerate, bipartite and planar graphs. We conclude with the following corollary.

\begin{corollary}
The maximum number of minimal connected dominating sets in a 3-degenerate bipartite planar graph of order $n$ is in $\Omega(1.472^n)$.
\end{corollary}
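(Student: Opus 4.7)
The plan is to specialise the construction $G_t^k$ from Theorem~\ref{lowerbound} to the case $t=3$, and then verify that the resulting graph lies in the required graph class while retaining the counting argument.

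First, I would invoke the remark immediately following Lemma~\ref{Gi}: since $s$ is a cut-vertex that dominates all of $X$ in every copy, we may delete the edges $E(X)$ from each block without affecting which sets are minimal connected dominating sets. In the modified graph the vertex set $X$ of each block is independent, so each copy of the block is bipartite with parts $\{z\}\cup X$ and $Y$; attaching the hub $s$ (adjacent only to the $X$-vertices across all blocks) preserves bipartiteness, giving a bipartite graph overall. For degeneracy, note that in each block every vertex has degree at most $t=3$ to vertices outside $\{s\}$, and $s$ can be removed first in the degeneracy ordering; iterating yields 3-degeneracy. Planarity for $t=3$ is exactly what Figure~\ref{planar} demonstrates by re-ordering the $Y$-vertices in each copy of $G_3$.

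Next, I would repeat the counting argument from Theorem~\ref{lowerbound} with $t=3$. By Lemma~\ref{Gi}, each block contributes $\tfrac{t^3+t^2}{2}-t = \tfrac{27+9}{2}-3 = 15$ minimal connected dominating sets that intersect its $X$-layer, and since every minimal connected dominating set of $G_3^k$ must contain the cut-vertex $s$ together with at least one $X$-vertex from each block, the choices across the $k$ blocks multiply. With $n = 7k+1$, this yields $15^{(n-1)/7}$ minimal connected dominating sets, and since $15^{1/7} > 1.472$, the bound $\Omega(1.472^n)$ follows.

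There is no real obstacle here: the work was essentially done in Theorem~\ref{lowerbound} and in the observations between that theorem and the corollary. The only thing to be slightly careful about is that all three graph-class conditions (3-degenerate, bipartite, planar) are simultaneously achieved by the same drawing/ordering of $G_3^k$ (with the edges of $X$ removed), which the paragraph preceding the corollary already points out.
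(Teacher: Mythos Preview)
Your proposal is correct and follows exactly the approach the paper intends (the paper in fact states the corollary without an explicit proof, relying on the observations in the preceding paragraph together with the count from Theorem~\ref{lowerbound}). One small slip in your degeneracy argument: $s$ cannot be removed \emph{first} in a degeneracy ordering, since it has degree $3k$; the right observation is that every vertex other than $s$ has degree at most~$3$ in $G_3^k$ (with $E(X)$ deleted), so one repeatedly removes degree-$\le 3$ vertices until only $s$ remains, which certifies $3$-degeneracy.
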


\section{Conclusion}

The method we adopted for constructing asymptotic worst-case examples for  enumerating minimal connected dominating sets consists of combining copies of a certain base-graph 
having a particular subset of vertices that must contain elements of any minimal connected dominating set, being linked to a main hub-vertex. For example, the graph $G_{4}$ has 36 minimal connected dominating sets, each of which must contain elements of the set $X$, which in turn is linked to the hub-vertex $s$ in $G_4^k$.

The main question at this stage is: can we do better? 
We believe it is very difficult to find a base-graph of order eight or less that can be used to achieve a higher lower-bound since it would have to have at least 25 minimal connected dominating sets. Moreover, any better example that contains more than 9 vertices must have a  much larger number of minimal connected dominating sets. For example, to achieve a better lower bound with a base-graph of order 10 (or 11), such a graph must have at least 54 (respectively 80) minimal connected dominating sets. It would be challenging to obtain such a construction, which is hereby posed as an open problem.



\end{document}